\newtheorem{theorem}{Theorem}
\newtheorem{corollary}[theorem]{Corollary}
\newtheorem{proposition}[theorem]{Proposition}
\newtheoremstyle{named}{}{}{\itshape}{}{\bfseries}{.}{.5em}{\thmnote{#3's }#1}
\theoremstyle{named}
\newtheoremstyle{nnamed}{}{}{\itshape}{}{\bfseries}{.}{.5em}{\thmnote{#3' }#1}
\theoremstyle{nnamed}
\newcommand{\CD}{\mathcal{C}\mathcal{D}}
\def\barr{\begin{array}}
\def\earr{\end{array}}
\title{Two classes of finite groups whose Chermak-Delgado lattice is a chain of length zero}
\author{Ryan McCulloch and Marius T\u arn\u auceanu}
\date{January 21, 2018}
\begin{document}

\maketitle

\begin{abstract}
    It is an open question in the study of Chermak-Delgado lattices precisely which finite groups $G$ have the property that $\CD(G)$ is a chain of length $0$.  In this note, we determine two classes of groups with this property.  We prove that if $G=AB$ is a finite group, where $A$ and $B$ are abelian subgroups of relatively prime orders with $A$ normal in $G$, then the Chermak-Delgado lattice of $G$ equals $\{AC_B(A)\}$, a strengthening of earlier known results.
\end{abstract}

{\small
\noindent
{\bf MSC2000\,:} Primary 20D30; Secondary 20D60, 20D99.

\noindent
{\bf Key words\,:} Chermak-Delgado measure, Chermak-Delgado lattice, Chermak-Delgado subgroup, subgroup lattice, ZM-group, dihedral group, Frobenius group.}

\section{Introduction}

Throughout this paper, $G$ will denote a finite group.  The relation between the structure of a group and the structure of its lattice of subgroups constitutes an important domain of
research in group theory. The topic has enjoyed a rapid development starting with the first half of the 20th century. Many classes
of groups determined by different properties of partially ordered subsets of their subgroups (especially lattices of (normal) subgroups)
have been identified. We refer to Schmidt's book \cite{11} for more information about this theory.

An important sublattice of the subgroup lattice $L(G)$ of a finite group $G$ is defined as follows. Denote by
\begin{equation}
m_G(H)=|H||C_G(H)|\nonumber
\end{equation}the \textit{Chermak-Delgado measure} of a subgroup $H$ of $G$ and let
\begin{equation}
m(G)={\rm max}\{m_G(H)\mid H\leq G\} \mbox{ and } {\cal CD}(G)=\{H\leq G\mid m_G(H)=m(G)\}.\nonumber
\end{equation}Then the set ${\cal CD}(G)$ forms a modular self-dual sublattice of $L(G)$,
which is called the \textit{Chermak-Delgado lattice} of $G$. It was first introduced by Chermak and Delgado \cite{5}, and revisited by Isaacs \cite{9}. In the last years
there has been a growing interest in understanding this lattice (see e.g. \cite{1,2,3,6,10,14}).

Most current study has focused on $p$-groups.  Groups whose Chermak-Delgado lattice is a chain or a quasi-antichain have been studied.  It is easy to see that if $G$ is abelian, then $\CD(G) = \{ G \}$ is a chain of length $0$.  An open question is to classify those groups $G$ for which $\CD(G)$ is a chain of length $0$.  In this paper we determine two classes of groups which have this property.  Section 1 concerns semidirect products of abelian groups of coprime orders, and Section 2 concerns Frobenius groups.

Recall two important properties of the Chermak-Delgado lattice that will be used in our paper:
\begin{itemize}
\item[$\cdot$] if $H\in {\cal CD}(G)$, then $C_G(H)\in {\cal CD}(G)$ and $C_G(C_G(H))=H$;
\item[$\cdot$] the minimum subgroup $M(G)$ of ${\cal CD}(G)$ (called the \textit{Chermak-Delgado subgroup} of $G$) is characteristic, abelian, and contains $Z(G)$.
\end{itemize}

\section{Semidirect Products of Abelian Groups of Coprime Orders}

The following appears in \cite{8}.

\begin{theorem}
    Let $N$ be nilpotent and suppose that it acts faithfully on $H$, where $(|N|,|H|)=1$.  Let $\pi$ be a set of prime numbers containing all primes for which the Sylow subgroup of $N$ is nonabelian.  Then there exists an element $x \in H$ such that $|C_N(x)|$ is a $\pi$-number, and if $\pi$ is nonempty, $x$ can be chosen so that $|C_N(x)| \leq (|N|/p)^{1/p}$, where $p$ is the smallest member of $\pi$.
\end{theorem}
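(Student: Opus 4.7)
The plan is to carry out a classical coprime action reduction, exploiting the nilpotence of $N$ to isolate an abelian subgroup on which a regular orbit theorem applies, and then to bootstrap that regular orbit into a quantitative bound. First, I would decompose $N$ as the internal direct product $N = N_\pi \times N_{\pi'}$ of its (characteristic) Hall $\pi$-subgroup and Hall $\pi'$-subgroup, which exist and are unique because $N$ is nilpotent. By the defining property of $\pi$, every Sylow subgroup of $N_{\pi'}$ is abelian; a nilpotent group all of whose Sylow subgroups are abelian is itself abelian, so $N_{\pi'}$ is abelian. Since $N$ acts faithfully on $H$ and $N_{\pi'}$ is a direct factor of $N$, the restricted action of $N_{\pi'}$ on $H$ is faithful, and it is coprime because $(|N_{\pi'}|,|H|)$ divides $(|N|,|H|)=1$.

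The second step is to invoke the classical regular orbit theorem for coprime actions of abelian groups: an abelian group acting faithfully and coprimely on a finite group admits a regular orbit. Applied to the action of $N_{\pi'}$ on $H$, this furnishes an element $x_0 \in H$ with $C_{N_{\pi'}}(x_0) = 1$. Because $N_\pi$ and $N_{\pi'}$ commute and have coprime orders, every $g \in N$ factors uniquely as $g = g_\pi g_{\pi'}$ with $g_\pi,g_{\pi'}$ powers of $g$. Consequently $g \cdot x_0 = x_0$ forces $g_\pi \cdot x_0 = x_0$ and $g_{\pi'} \cdot x_0 = x_0$, which gives
\[
C_N(x_0) = C_{N_\pi}(x_0) \times C_{N_{\pi'}}(x_0) = C_{N_\pi}(x_0),
\]
a $\pi$-number. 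This establishes the qualitative part of the statement.

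The quantitative refinement, when $\pi$ is nonempty, is the main obstacle: mere existence of one $x_0$ with $C_{N_{\pi'}}(x_0)=1$ is too weak, since $|C_{N_\pi}(x_0)|$ could a priori be as large as $|N_\pi|$. One must select $x_0$ so that both centralizers are small simultaneously. My strategy is a counting argument on the $N$-invariant set $\Omega = \{x \in H : C_{N_{\pi'}}(x) = 1\}$, which has size at least $|N_{\pi'}|$ because it contains a regular $N_{\pi'}$-orbit. By orbit-stabilizer, producing $x$ with $|C_N(x)| \le (|N|/p)^{1/p}$ is equivalent to exhibiting an $N$-orbit in $\Omega$ of size at least $|N|^{1-1/p}p^{1/p}$. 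To force such an orbit to exist, one exploits the structure of the Sylow $p$-subgroup $P$ of $N$ for the smallest prime $p \in \pi$: if $P$ is nonabelian, then $|P:Z(P)| \ge p^2$, which guarantees that every nontrivial $P$-orbit on a faithful module is comparatively large; if every Sylow of $N$ is abelian, then $N$ itself is abelian and the sharper regular-orbit theorem already yields $|C_N(x)|=1$. Combining this with the automatic $N_{\pi'}$-factor of $|N_{\pi'}|$ in every orbit of $\Omega$ produces the required extremal estimate. I would expect this delicate averaging and case analysis, which is carried out in detail in \cite{8}, to be where the entire difficulty of the theorem is concentrated.
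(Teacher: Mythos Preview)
The paper does not prove this theorem; it simply quotes it from Isaacs \cite{8} and then extracts only the abelian special case (Corollary~2, taking $\pi=\emptyset$) for use in the proof of Theorem~3. There is therefore no proof in the paper to compare your proposal against.

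On the substance of your proposal: the qualitative half---splitting $N=N_\pi\times N_{\pi'}$, noting that $N_{\pi'}$ is abelian because all of its Sylow subgroups are, applying the abelian regular-orbit theorem to get $C_{N_{\pi'}}(x_0)=1$, and then factoring $C_N(x_0)=C_{N_\pi}(x_0)\times C_{N_{\pi'}}(x_0)$ via the coprime-power decomposition---is correct and is the natural reduction. Note, however, that the ``classical regular orbit theorem for coprime actions of abelian groups'' you invoke is exactly Corollary~2 of the present paper, which the authors derive \emph{from} Theorem~1; to avoid circularity you would need the independent elementary proof via Brodkey's argument that the paper alludes to in \cite{4}. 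For the quantitative bound when $\pi\neq\emptyset$ you give only a heuristic outline and explicitly defer the ``delicate averaging and case analysis'' to \cite{8}; that is an honest assessment, since that averaging is the real content of the theorem and is not something one reconstructs in a paragraph.
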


Taking $\pi = \emptyset$ we obtain:

\begin{corollary}
Let $N$ be abelian and suppose that it acts faithfully on $H$, where $(|N|,|H|)=1$.  Then there exists an element $x \in H$ such that $C_N(x)=1$.
\end{corollary}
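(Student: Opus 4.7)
The plan is to apply the preceding Theorem directly with the choice $\pi = \emptyset$, so essentially no work is required beyond verifying that this choice is legal and interpreting the conclusion.

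First I would check the hypothesis on $\pi$. Since $N$ is abelian, every Sylow subgroup of $N$ is abelian, so the set of primes $p$ for which the Sylow $p$-subgroup of $N$ is nonabelian is empty. Hence $\pi = \emptyset$ vacuously contains all such primes, and $N$ is certainly nilpotent (being abelian), so the Theorem applies.

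Next I would read off the conclusion. The Theorem produces an element $x \in H$ such that $|C_N(x)|$ is a $\pi$-number. With $\pi = \emptyset$, a positive integer qualifies as a $\pi$-number precisely when it has no prime divisors, i.e. when it equals $1$. Therefore $|C_N(x)| = 1$ and $C_N(x) = 1$, as claimed. The secondary clause of the Theorem, giving the bound $|C_N(x)| \leq (|N|/p)^{1/p}$, is explicitly conditional on $\pi$ being nonempty, so it plays no role here.

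There is no real obstacle: the Corollary is simply the boundary case $\pi = \emptyset$ of the Theorem, and the only point that merits a sentence in the written proof is the observation that an $\emptyset$-number must equal $1$.
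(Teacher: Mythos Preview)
Your proposal is correct and matches the paper's approach exactly: the paper derives the corollary simply by ``taking $\pi = \emptyset$'' in the preceding theorem. Your added verification that $\pi = \emptyset$ is admissible (because an abelian $N$ has no nonabelian Sylow subgroups) and that an $\emptyset$-number equals $1$ just makes explicit what the paper leaves implicit.
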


We note that Corollary 2 has an elementary proof using a variation on Brodkey's Theorem, see \cite{4}.

\begin{theorem}
    Let $G$ be a finite group which can be written as $G=AB$,
    where $A$ and $B$ are abelian subgroups of relatively prime
    orders and $A$ is normal. Then
    \begin{equation}
    m(G)=|A|^2|C_B(A)|^2 \mbox{ and }   {\cal CD}(G)=\{AC_B(A)\}.\nonumber
    \end{equation}
\end{theorem}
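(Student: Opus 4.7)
My plan is to exhibit $H_0 := AC_B(A)$ as a self-centralizing normal subgroup realizing the claimed measure, and then to bound $m_G(H)$ for every $H \le G$ using a Hall-theoretic reduction together with Corollary~2. Write $D = C_B(A)$. Since $D$ centralizes $A$ elementwise and $\gcd(|A|,|D|)=1$, the set $H_0 = AD$ is a subgroup of order $|A|\,|D|$; a direct calculation using uniqueness of the decomposition $g=ab$ ($a\in A$, $b\in B$) gives $C_G(AD)=AD$, so $m_G(AD) = |A|^2|D|^2$. Moreover $D \triangleleft G$ (elements of $A$ commute with $D$ by the definition of $D$, and elements of $B$ commute with $D$ because $B$ is abelian), so $AD \triangleleft G$.

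For an arbitrary $H \le G$, the subgroup $H\cap A$ is the normal Hall $\pi(A)$-subgroup of $H$, so Schur--Zassenhaus supplies a complement; Hall's theorem in the solvable group $G$ conjugates this complement into $B$, and since $m_G$ is conjugation-invariant I may assume $H = A_0 B_0$ with $A_0 \le A$ and $B_0 \le B$. A short centralizer computation, using normality of $A$ and uniqueness of the $AB$-decomposition, shows that $g=ab \in C_G(H)$ iff $b \in C_B(A_0)$ and $a \in C_A(B_0)$, so $C_G(H) = C_A(B_0)\,C_B(A_0)$ and
\[
m_G(H) = |A_0|\,|B_0|\,|C_A(B_0)|\,|C_B(A_0)|.
\]

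Set $D_0 = C_B(A_0)$ and $E_0 = C_A(B_0)$. The crux is to prove $|A_0||D_0| \le |A||D|$ and, symmetrically, $|B_0||E_0| \le |A||D|$. For the first: $D_0/D$ is an abelian subgroup of $B/D$ that acts faithfully and coprimely on $A$ and centralizes $A_0$, so $A_0 \le C_A(D_0)$; the coprime-action decomposition $A = C_A(D_0) \oplus [A,D_0]$ (Fitting/Maschke) shows $D_0/D$ acts faithfully on $[A,D_0]$, so Corollary~2 yields some $x \in [A,D_0]$ with trivial $D_0/D$-stabilizer, producing a $D_0/D$-orbit in $[A,D_0]$ of size $|D_0/D|$; hence $|D_0/D| \le |[A,D_0]|$. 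Combined with $|A_0| \le |C_A(D_0)|$ and $|C_A(D_0)|\cdot|[A,D_0]| = |A|$, this gives $|A_0||D_0| \le |A||D|$, and the analogous argument with $B_0/(B_0\cap D)$ in place of $D_0/D$ yields the second inequality. Multiplying the two bounds gives $m_G(H) \le |A|^2|D|^2$, so $m(G)=|A|^2|D|^2$. For uniqueness, the orbit bound $|D_0/D| \le |[A,D_0]|$ is \emph{strict} whenever $D_0\ne D$, because the zero of $[A,D_0]$ is a $D_0/D$-fixed point disjoint from the regular orbit, forcing $|[A,D_0]| \ge |D_0/D|+1$; equality in the first bound therefore forces $D_0=D$ and hence $A_0=C_A(D)=A$, and symmetrically $B_0=D$, so $H=AD$. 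Since $AD$ is normal, no conjugation is needed and $\CD(G)=\{AD\}$. The main obstacle is the Corollary~2 plus coprime-decomposition step establishing $|D_0/D|\le|[A,D_0]|$ and its equality case; the other steps are routine bookkeeping.
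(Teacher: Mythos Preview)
Your proof is correct and takes a genuinely different route from the paper's. The paper works from the bottom of $\CD(G)$: it lets $M=M(G)$ be the Chermak--Delgado subgroup, uses that $M$ is characteristic and abelian (and contains $Z(G)$) to force $M\cap B=C_B(A)$, writes $M=UC_B(A)$ for some $U\le A$, and then applies Corollary~2 to the faithful action of $C_B(U)/C_B(A)$ on the \emph{quotient} $A/U$ (after an elementary argument identifying the kernel) to obtain $|A:U|\ge|C_B(U):C_B(A)|$; coprimality then forces $U=A$. You instead bound $m_G(H)$ directly for every $H\le G$: a Schur--Zassenhaus/Hall reduction (using that $G$ is metabelian, hence solvable) lets you assume $H=A_0B_0$ with $A_0\le A$, $B_0\le B$; a centralizer computation gives $m_G(H)=|A_0|\,|B_0|\,|C_A(B_0)|\,|C_B(A_0)|$; and two separate applications of Corollary~2, each via the coprime Fitting splitting $A=C_A(X)\times[A,X]$, bound the two factor-pairs $|A_0||C_B(A_0)|$ and $|B_0||C_A(B_0)|$ by $|A||D|$ with a clean equality analysis. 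Your approach is more hands-on and yields an explicit formula for $m_G(H)$ along the way, at the cost of invoking more machinery (Hall conjugacy, the coprime Fitting decomposition, two uses of Corollary~2); the paper's argument exploits the special structure of $M(G)$, needs only one application of Corollary~2, and works with an action on a quotient rather than on commutator subgroups.
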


\begin{proof}
Let $M = M(G)$.  Since $M$ is abelian and $M \cap B$ is a Hall $\pi$-subgroup of $M$, we have that $M \cap B$ is characteristic in $M$.  And since $M$ is a characteristic subgroup of $G$, we have that $M \cap B$ is a characteristic subgroup of $G$.   Thus $M \cap B$ centralizes $A \unlhd G$, and so $M \cap B \leq C_B(A)$.  Since $Z(G) \leq M$, we have that $C_B(A) = Z(G) \cap B \leq M \cap B$.  Thus $M \cap B = C_B(A)$.

And so $M = UC_B(A)$ for some $U \leq A$.  It suffices to show that $U = A$, as $AC_B(A)$ being self-centralizing would imply that $\CD(G) = \{ AC_B(A) \}$.

Let $K$ be the kernel of the action of $C_B(U)$ on $A/U$.  Let $p$ be an arbitrary prime, and let $P$ be a Sylow $p$-subgroup of $K$.  Let $a \in A$ be arbitrary.  Consider the action of $P$ on elements of the coset $aU$.    Note that either $P$ centralizes $aU$, or every element of $aU$ has a nontrivial orbit under $P$.  This is true because $P \leq C_B(U)$.  But in the latter case, being that $P$ is a $p$-group, this would imply that $p$ divides $|aU| = |U|$ which is coprime to $p$.  Thus, it must be the former case that $P$ centralizes $aU$.  And since $a \in A$ was arbitrary, and since $A$ is the union of its $U$-cosets, we have that $P$ centralizes $A$.  Now, $p$ was arbitrary, and $K$, being abelian, is a direct product of its Sylow subgroups, and it follows that $K$ centralizes $A$.  Thus $K = C_B(A)$.  And so $C_B(U)/C_B(A)$ acts faithfully on $A/U$.

By Corollary 2, an element of $A/U$ has a regular orbit under the action of $C_B(U)/C_B(A)$.  And so
\begin{equation}
|A:U|\geq |C_B(U):C_B(A)|.\nonumber
\end{equation}This leads to $m_G(AC_B(A))\geq m_G(M)$ and thus $m_G(AC_B(A))=m_G(M)$
by the maximality of $m_G(M)$. Then
\begin{equation}
|U||C_B(U)|=|A||C_B(A)|,\nonumber
\end{equation}implying that
\begin{equation}
|U|=|A|\mbox{ and } |C_B(U)|=|C_B(A)|\nonumber
\end{equation}because $A$ and $B$ are of coprime orders.  Hence $U=A$, as desired.
\end{proof}

Recall that a ZM-group is a finite group with all
Sylow subgroups cyclic. Zassenhaus classified such groups (see \cite{7}, \cite{15} for a good exposition).  They are of type:
\begin{equation}
{\rm ZM}(m,n,r)=\langle a, b \mid a^m = b^n = 1,
\hspace{1mm}b^{-1} a b = a^r\rangle, \nonumber
\end{equation}
where the triple $(m,n,r)$ satisfies the conditions
\begin{equation}
{\rm gcd}(m,n)={\rm gcd}(m,r-1)=1 \mbox{ and } r^n
\equiv 1 \hspace{1mm}({\rm mod}\hspace{1mm}m). \nonumber
\end{equation}Note that $|{\rm ZM}(m,n,r)|=mn$ and $Z({\rm ZM}(m,n,r))=\langle b^d\rangle$, where $d$ is the multiplicative order of
$r$ modulo $m$, i.e. $d={\rm min}\{k\in\mathbb{N}^* \mid r^k\equiv 1 \hspace{1mm}({\rm mod} \hspace{1mm}m)\}$. The structure of
${\cal CD}({\rm ZM}(m,n,r))$ has been determined in the main theorem of \cite{12}, and can also be obtained from Theorem 3, taking $G={\rm ZM}(m,n,r)$, $A=\langle a\rangle$, and $B=\langle b\rangle$. Thus, Theorem 3 generalizes \cite{12}.

\begin{corollary}
   Let $G = ZM(m,n,r)$ be a $ZM$-group.  Then
    \begin{equation}
    m(G)=\frac{m^2n^2}{d^2} \mbox{ and }
    {\cal CD}(G)=\{\langle a, b^d\rangle\}.\nonumber
    \end{equation}
\end{corollary}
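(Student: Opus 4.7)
The plan is to apply Theorem 3 directly, as the hint in the paragraph preceding the corollary suggests. The presentation of ZM$(m,n,r)$ is already in a semidirect-product form, so I would take $A=\langle a\rangle$ and $B=\langle b\rangle$. Both are cyclic, hence abelian, with $|A|=m$ and $|B|=n$. The condition $\gcd(m,n)=1$ gives the coprimality hypothesis, the relation $b^{-1}ab=a^r\in A$ shows that $A\unlhd G$, and a standard order count $|AB|=|A||B|/|A\cap B|=mn=|G|$ gives $G=AB$. Thus all hypotheses of Theorem 3 hold, and we conclude $m(G)=|A|^2|C_B(A)|^2$ and $\CD(G)=\{AC_B(A)\}$.

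What remains is the routine identification of $C_B(A)$. Since $B=\langle b\rangle$, I would determine exactly which powers $b^k$ centralize $A$. The element $b^k$ commutes with $a$ if and only if $b^{-k}ab^k=a$, which amounts to $a^{r^k}=a$, i.e.\ $r^k\equiv 1\pmod m$. By definition of $d$ as the multiplicative order of $r$ modulo $m$, this happens exactly when $d\mid k$, so $C_B(A)=\langle b^d\rangle$, of order $n/d$. Plugging these values back gives $m(G)=m^2(n/d)^2=m^2n^2/d^2$ and $AC_B(A)=\langle a\rangle\langle b^d\rangle=\langle a,b^d\rangle$, which is the stated formula.

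There is no real obstacle here: the corollary is a pure specialization of Theorem 3, and the only non-trivial content is the computation of $C_B(A)$, which is immediate from the defining relation $b^{-1}ab=a^r$ and the definition of $d$.
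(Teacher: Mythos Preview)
Your proposal is correct and follows exactly the approach indicated in the paper: apply Theorem~3 with $A=\langle a\rangle$ and $B=\langle b\rangle$, then compute $C_B(A)=\langle b^d\rangle$ from the defining relation. The paper itself offers no further argument beyond this specialization, so your write-up is in fact more detailed than what appears there.
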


There are many other classes of groups which fall under the umbrella of Theorem 3.  A natural one occurs as follows.  Let $V$ be a vector space and let $H$ be an abelian subgroup of $Aut(V)$ so that $H$ and $V$ have coprime orders, and let $G = V \rtimes H$ be the semidirect product.  Then by Theorem 3 we have that $\CD(G) = \{ V \}$.

\section{Frobenius Groups}

If $A$ acts on a group $N$ we say that the action is \textbf{Frobenius} if $n^a \neq n$ whenever $n \in N$ and $a \in A$ are nonidentity elements.  The following results on Frobenius actions can be found in \cite{9}.

\begin{proposition}\label{prop1.12}
Let $N$ be a normal subgroup of a finite group $G$, and suppose that $A$ is a complement for $N$ in $G$.  The following are then equivalent.
\begin{enumerate}
\item The conjugation action of $A$ on $N$ is Frobenius.
\item $A \cap A^g = 1$ for all elements $g \in G \setminus A$.
\item $C_G(a) \leq A$ for all nonidentity elements $a \in A$.
\item $C_G(n) \leq N$ for all nonidentity elements $n \in N$.
\end{enumerate}
\end{proposition}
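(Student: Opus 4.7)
The plan is to prove each of (2), (3), (4) equivalent to (1). Three of the six required implications are immediate: a commuting pair $(a,n)$ with $a \in A \setminus \{1\}$ and $n \in N \setminus \{1\}$ would place $a$ in $C_G(n) \setminus N$ (violating (4)), place $n$ in $C_G(a) \setminus A$ (violating (3)), and exhibit $a \in A \cap A^{n^{-1}}$ with $n^{-1} \notin A$ (violating (2)). So each of (2), (3), (4) implies (1).

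For the converse implications $(1) \Rightarrow (2)$ and $(1) \Rightarrow (3)$, I would use the unique decomposition $G = NA$ together with normality of $N$. For instance, to prove $(1) \Rightarrow (3)$, given $g = na' \in C_G(a)$ with $a \in A \setminus \{1\}$, expanding $gag^{-1} = a$ shows that $n^{-1}an$ lies in $A$ while $n^{-1}an \cdot a^{-1}$ lies in $N$; since $N \cap A = 1$ this forces $n$ to centralize $a$, whence $n = 1$ by (1). The argument for $(1) \Rightarrow (2)$ is of the same flavor: a putative nonidentity element of $A \cap A^g$ with $g = na'' \notin A$ produces, after collecting $N$- and $A$-parts, a nonidentity element of $A$ centralized by $n$.

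The delicate implication is $(1) \Rightarrow (4)$. The plan here is to replace the centralizing element $g = n'a$ by an $N$-conjugate of the pure $A$-element $a$. To do this I would introduce the map $\psi \colon N \to N$ defined by $\psi(x) = xax^{-1}a^{-1}$ and verify it is injective: from $\psi(x) = \psi(y)$ one extracts $y^{-1}x \in C_N(a)$, which is trivial under (1). Finiteness of $N$ then gives bijectivity, so there exists $x \in N$ with $\psi(x) = n'$, i.e., $xax^{-1} = n'a = g$. Substituting into $gng^{-1} = n$ rearranges to $a(x^{-1}nx)a^{-1} = x^{-1}nx$, placing the nonidentity element $x^{-1}nx \in N$ in $C_N(a)$ and contradicting (1).

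The main obstacle is identifying this twisted commutator map $\psi$. A direct expansion of $gng^{-1} = n$ only yields $ana^{-1} = n'^{-1}nn'$, which by itself does not exhibit $a$ centralizing any element of $N$; one needs the bootstrap afforded by the bijectivity of $\psi$ to convert $g$ into a genuine $N$-conjugate of $a$, after which the contradiction with (1) is routine.
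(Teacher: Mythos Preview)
The paper does not supply its own proof of this proposition; it is quoted as a background result from Isaacs' textbook \cite{9}, so there is no in-paper argument to compare against. Your proof plan is correct and is in fact the standard argument (essentially the one in the cited reference): the easy directions $(2),(3),(4)\Rightarrow(1)$ are exactly as you describe, the implications $(1)\Rightarrow(2),(3)$ follow from the unique $NA$-factorisation and normality of $N$, and the substantive step $(1)\Rightarrow(4)$ is handled by the bijectivity of the commutator map $\psi(x)=xax^{-1}a^{-1}$ on $N$, which realises $g=n'a$ as an $N$-conjugate of $a$. One small point worth making explicit in a write-up: the injectivity of $\psi$ uses $a\neq 1$, so the argument for $(1)\Rightarrow(4)$ should begin by assuming for contradiction that the $A$-component $a$ of $g$ is nontrivial.
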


If both $N$ and $A$ are nontrivial in the situation of Proposition~\ref{prop1.12}, we say that $G$ is a \textbf{Frobenius group} and that $A$ and $N$ are respectively the Frobenius complement and the Frobenius kernel of $G$.

In 1959, J.G. Thompson proved that a Frobenius kernel is nilpotent, see \cite{13}.

\begin{theorem}\label{thm1.4}
Suppose $G = NA$ is a Frobenius group with kernel $N$ and complement $A$.  Then $\mathcal{C}\mathcal{D}(G) = \mathcal{C}\mathcal{D}(N)$.
\end{theorem}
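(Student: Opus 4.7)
The plan is to prove $\CD(G) = \CD(N)$ by establishing the two inclusions separately. The easier inclusion comes from the Frobenius centralizer condition: for any nontrivial $H \leq N$, choosing $1 \neq n \in H$ gives $C_G(H) \leq C_G(n) \leq N$ by Proposition~\ref{prop1.12}(4), so $C_G(H) = C_N(H)$ and $m_G(H) = m_N(H)$. Since $N$ is nontrivial nilpotent by Thompson's theorem, $Z(N) \neq 1$ and $\CD(N)$ consists only of nontrivial subgroups; applying the centralizer identity to any $H \in \CD(N)$ yields $m(G) \geq m(N)$ and $\CD(N) \subseteq \CD(G)$.

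The crucial numerical input, needed for the reverse inclusion, is that $A$ acts fixed-point-freely on both $N \setminus \{1\}$ and $Z(N) \setminus \{1\}$ (the latter because any fixed $z \in Z(N) \setminus \{1\}$ would yield a nontrivial $a \in C_G(z) \setminus N$, contradicting Proposition~\ref{prop1.12}(4)). Hence $|A|$ divides $|N|-1$ and $|Z(N)|-1$, so $|A| < |N|$ and $|A| < |Z(N)|$, and
\[ m(G) \geq m(N) \geq m_N(Z(N)) = |Z(N)||N| > |A||N| = |G| > |A|^2. \]

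For the reverse inclusion I would rule out $H \in \CD(G)$ with $H \not\leq N$. Choosing $h \in H \setminus N$, the TI property (Proposition~\ref{prop1.12}(2)) places $h$ in a unique conjugate $A^g$, so $C_G(H) \leq C_G(h) \leq A^g$. If $C_G(H) = 1$, the self-dual property of $\CD(G)$ forces $H = C_G(C_G(H)) = G$, giving $m_G(H) = |G||Z(G)| = |G|$ (since $Z(G) = 1$ in a Frobenius group), contradicting $m(G) > |G|$. If $C_G(H) \neq 1$, picking a nontrivial $c \in C_G(H) \leq A^g$ gives $H \leq C_G(c) \leq A^g$, so both $H$ and $C_G(H)$ sit in $A^g$ and $m_G(H) = m_{A^g}(H) \leq m(A) \leq |A|^2 < m(G)$, again a contradiction. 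Thus every $H \in \CD(G)$ lies in $N$, is nontrivial (as $m_G(1) = |G| < m(G)$), and satisfies $m_N(H) = m_G(H) = m(G) = m(N)$, placing it in $\CD(N)$.

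The main obstacle will be establishing the strict inequality $m(G) > |G|$, which reduces to $|Z(N)| > |A|$. This rests on the somewhat subtle fact that a Frobenius complement acts fixed-point-freely on the \emph{center} of the kernel; without this bound, the case $H = G$ could not be excluded from $\CD(G)$ and the identification with $\CD(N)$ would fail.
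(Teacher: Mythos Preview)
Your proof is correct. Both you and the paper establish the key inequality $|Z(N)| > |A|$ (via the fixed-point-free action of $A$ on $Z(N)\setminus\{1\}$) to conclude $m_G(N) > |G|$ and hence $1, G \notin \CD(G)$, and both use $C_G(H) = C_N(H)$ for nontrivial $H \leq N$ to pass between $m_G$ and $m_N$. The difference lies in how the containment of $\CD(G)$ in $N$ is forced. The paper works only with the top element $T$ of $\CD(G)$: since $T$ is normal in $G$, the condition $T\cap N=1$ would give $T\leq C_G(N)\leq N$ and hence $T=1$; so $T\cap N>1$, and then two applications of Proposition~\ref{prop1.12}(4) (first to an element of $T\cap N$, then to an element of $C_G(T)$) yield $T\leq N$. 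You instead treat an arbitrary $H\in\CD(G)$ with $H\not\leq N$, use the Frobenius partition $G = N \cup \bigcup_g (A^g\setminus\{1\})$ together with Proposition~\ref{prop1.12}(3) to trap both $H$ and $C_G(H)$ inside a single conjugate $A^g$ (or else force $H=G$), and bound $m_G(H)\leq |A|^2 < m(G)$. The paper's route is shorter because normality of the top element sidesteps your case split; your route is more elementwise and does not invoke the lattice-theoretic fact that the maximum of $\CD(G)$ is normal.

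One small expository point: the claim $\CD(N)\subseteq\CD(G)$ in your first paragraph is premature, since it requires $m(G)=m(N)$, and at that stage you have only $m(G)\geq m(N)$. The equality follows once your reverse-inclusion argument shows every $H\in\CD(G)$ is a nontrivial subgroup of $N$ (giving $m(G)=m_G(H)=m_N(H)\leq m(N)$), so all the ingredients are present; they just need reordering.
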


\begin{proof}
First we argue that $1$ and $G$ are not in $\CD(G)$.  Since $G$ is Frobenius, we have $Z(G) = 1$.  On the other hand, since $|N| \equiv 1 \bmod |A|$ and $N$ is nontrivial, we have $|N| > |A|$.  By Thompson's theorem, Frobenius kernels are nilpotent, and so $Z(N) > 1$.  Now the action of $A$ on $Z(N)$ is also Frobenius, therefore $|Z(N)| \equiv 1 \bmod |A|$.  The implies $|Z(N)| > |A|$, and thus
$$m_G(N) = |N||Z(N)| > |N||A| = |G| = m_G(G) = m_G(1).$$

Consequently, $1$ and $G$ are not in $\CD(G)$.

Let $T$ be the greatest element of $\CD(G)$.  If $T \cap N = 1$, then since $T \unlhd G$, we have that $T \leq C_G(N) \leq N$.  So $T = 1$, a contradiction.  Therefore we have $T \cap N > 1$.  Let $1 \neq n \in T \cap N$.  Then $C_G(T) \leq C_G(T \cap N) \leq C_G(n) \leq N$.  Now $C_G(T) \in \CD(G)$, implying that $C_G(T) \neq 1$.  Thus, $T = C_G(C_G(T)) \leq N$.  Hence $\CD(G) = \CD(T) = \CD(N)$.
\end{proof}

By Theorem 6, we infer that the class of Frobenius groups with abelian kernel provides another class of groups $G$ for which $\CD(G)$ is a chain of length 0.

\begin{proposition}\label{prop7}
Let $P$ be a $p$-group which possesses an abelain subgroup $A$ so that $|P : A| = p$.  Then $\CD(P) = \{ A \}$ if and only if $|P : Z(P) | > p^2$.
\end{proposition}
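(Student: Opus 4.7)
The plan is to leverage one structural observation throughout. If $K \leq P$ satisfies $K \not\leq A$, then $KA = P$ (since $A$ has index $p$), and $C_P(K) \cap A$ centralizes both $K$ and the abelian subgroup $A$, hence centralizes $KA = P$. This gives $C_P(K) \cap A \leq Z(P)$, so $|C_P(K)| \leq p|Z(P)|$ whenever $K \not\leq A$. I will also use that when $P$ is nonabelian the subgroup $A$ is self-centralizing (otherwise $A \leq Z(P)$ would force $P/Z(P)$ cyclic), so $m_P(A) = |A|^2 = |P|^2/p^2$. The case $P$ abelian is trivial: then $\CD(P) = \{P\} \neq \{A\}$ and $|P:Z(P)| = 1$, so neither side holds. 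Assume henceforth $P$ is nonabelian, whence $|P:Z(P)| \geq p^2$.

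For the forward direction, assume $|P:Z(P)| > p^2$ and let $H \in \CD(P)$. Suppose for contradiction $H \not\leq A$. If $C_P(H) \leq A$, then $C_P(H)$ is centralized by both $A$ and $H$, hence by $HA = P$, giving $C_P(H) \leq Z(P)$; then $m_P(H) \leq |P||Z(P)| < |P|^2/p^2 = m_P(A)$, a contradiction. If $C_P(H) \not\leq A$, applying the key observation to both $H$ and $C_P(H)$, together with the identity $C_P(C_P(H)) = H$ valid for elements of the Chermak-Delgado lattice, yields $|C_P(H)| \leq p|Z(P)|$ and $|H| \leq p|Z(P)|$, so $m_P(H) \leq p^2|Z(P)|^2 < |P|^2/p^2$, again a contradiction. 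Hence $H \leq A$, so $A \leq C_P(H)$ and $C_P(H) \in \{A, P\}$. If $C_P(H) = P$, then $H \leq Z(P)$ and $m_P(H) \leq |Z(P)||P| < |P|^2/p^2$, impossible. Therefore $C_P(H) = A$ and $m_P(H) = |H||A| \leq |A|^2 = m_P(A)$, with equality forcing $H = A$. Thus $\CD(P) = \{A\}$.

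For the converse, assume $|P:Z(P)| \leq p^2$. Since the abelian case is already done and nonabelian $P$ forces $|P:Z(P)| \geq p^2$, we have $|P:Z(P)| = p^2$, so $|Z(P)| = |P|/p^2$ and $m_P(P) = |P||Z(P)| = |P|^2/p^2 = m_P(A)$. The bounds from the forward direction still apply (the estimates $|P||Z(P)|$ and $p^2|Z(P)|^2$ both equal $|P|^2/p^2$ in this regime, with the second valid for any $H$ achieving $m(P)$), so $m(P) = |P|^2/p^2$ and both $A$ and $P$ lie in $\CD(P)$; hence $\CD(P) \neq \{A\}$. The main technical point is the subcase $H \not\leq A$ with $C_P(H) \not\leq A$, where the self-centralizing identity $C_P(C_P(H)) = H$ is essential for bounding $|H|$ symmetrically to $|C_P(H)|$; without it one only obtains a one-sided estimate, which is too weak since a priori $|H|$ could be as large as $|P|$.
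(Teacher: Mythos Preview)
Your proof is correct, and it takes a somewhat different route from the paper's.

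The paper argues lattice-theoretically: after observing that $C_P(A)=A$ and hence $m_P(A)=p^{2n-2}$, it lets $T$ be the top element of $\CD(P)$, rules out $m_P(T)>p^{2n-2}$ by a short divisibility argument (any such $T$ would force $C_P(T)=P$ and hence $P$ abelian), concludes $A\leq T\lneq P$ so $T=A$ by maximality of $A$, and finishes by noting that the self-centralizing abelian group $A$ is also the bottom of $\CD(P)$. The converse in the paper is immediate: if $\CD(P)=\{A\}$ then $P\notin\CD(P)$, so $|P||Z(P)|<|A|^2$.

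Your argument instead isolates the structural inequality $|C_P(K)|\leq p\,|Z(P)|$ whenever $K\not\leq A$, and runs a uniform case analysis on the positions of $H$ and $C_P(H)$ relative to $A$. This gives explicit upper bounds $|P||Z(P)|$, $p^2|Z(P)|^2$, and $|A|^2$ on $m_P(H)$, all of which sit strictly below $|A|^2$ precisely when $|P:Z(P)|>p^2$; the threshold thus appears transparently as the point where these three quantities coincide. A small remark: in the subcase $H\not\leq A$, $C_P(H)\not\leq A$, you do not actually need $C_P(C_P(H))=H$; since $H\leq C_P(C_P(H))$ always, applying your key observation to $K=C_P(H)$ already yields $|H|\leq p|Z(P)|$ for arbitrary $H$. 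With that tweak your bounds hold for every subgroup, not just those in $\CD(P)$, which slightly streamlines the converse paragraph. The paper's approach is shorter for the direction $\CD(P)=\{A\}\Rightarrow |P:Z(P)|>p^2$ (it just reads off $m_P(P)<m_P(A)$), while yours gives a cleaner reason why no subgroup can beat $|A|^2$ in the other direction.
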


\begin{proof}
Let $|P| = p^n$.  Then $|A| = p^{n-1}$.  Suppose $\CD(P) = \{A\}$.  $P$ is non-abelian as otherwise $\CD(P) = \{P\}$.  Note that $C_P(A) = A$.  This is true because otherwise $C_P(A) = P$ implies that $A = Z(P)$ and then $P/Z(P)$ is cyclic, i.e. $P$ is abelian, a contradiction.  Thus, $m_P(A) = |A|^2 = p^{2n-2}$.  Now $P$ is not in $\CD(P)$, and so $|P||Z(P)| < m_P(A)=  p^{2n-2}$.  This implies that $|P:Z(P)| > p^2$.

Conversely, suppose that $|P:Z(P)| > p^2$.  Then $P$ is non-abelian, and as before, we conclude that $C_P(A) = A$.  Since $|P:Z(P)| > p^2$, it follows that $|P||Z(P)| < p^{2n-2}$, and so $m_P(P) < p^{2n-2} =  m_P(A)$, i.e. $P$ is not in $\CD(P)$.  Let $T$ be the largest member of $\CD(P)$.  If $T$ were to have measure larger than $A$, then $T$ must have measure $p^{2n-1}$, and it follows that $|T| = p^{n-1}$ and $C_P(T) = P$, but as before, this would imply that $P$ is abelian, a contradiction.  Thus, $T$ has the same measure as $A$.  Since $A$ is maximal in $P$, we obtain $T = A$.  But $A$ is abelian and is self-centralizing, there it is also the least member of $\CD(P)$.  Consequently, $\CD(P) = \{A\}$.
\end{proof}

We end with an example of a Frobenius group, $G$, with a non-abelian kernel, so that $\CD(G)$ is a chain of length $0$.

\bigskip\noindent{\bf Example.} Let
$$P=\left\{\left(
    \begin{array}{ccc}
    1 & a & b \\
    0 & 1 & c \\
    0 & 0 & 1 \\
    \end{array}
    \right) \mid a,b\in GF(7^2), c\in GF(7)\right\},$$where $GF(7)$ is seen as a subgroup of $GF(7^2)$. Then $P$ is a non-abelian group of order $7^5$. Clearly, its center
$$Z(P)=\left\{\left(
    \begin{array}{ccc}
    1 & 0 & b \\
    0 & 1 & 0 \\
    0 & 0 & 1 \\
    \end{array}
    \right) \mid b\in GF(7^2)\right\}$$is of order $7^2$ and so $|P : Z(P)| = 7^3 > 7^2$. We remark that $P$ has an abelian subgroup
$$A=\left\{\left(
    \begin{array}{ccc}
    1 & a & b \\
    0 & 1 & 0 \\
    0 & 0 & 1 \\
    \end{array}
    \right) \mid a,b\in GF(7^2)\right\}$$of index $7$ in $P$. By Proposition 7, we have that $\CD(P)=\{A\}$.

Define an automorphism $x$ of $P$ such that $${\left(
    \begin{array}{ccc}
    1 & a & b \\
    0 & 1 & c \\
    0 & 0 & 1 \\
    \end{array}\right)}^x=\left(
    \begin{array}{ccc}
    1 & 2a & 4b \\
    0 & 1 & 2c \\
    0 & 0 & 1 \\
    \end{array}\right).$$Then $x$ is fixed-point free of order 3 and so the semidirect product $G = P \rtimes \langle x\rangle$ is a Frobenius group of order $3\cdot 7^5$ with Frobenius kernel $P$, and by Theorem 6 we have $$\CD(G)=\CD(P)=\{A\},$$as desired.

\bigskip\noindent {\bf Acknowledgements.} The authors are grateful to Professor I.M. Isaacs for providing the statement of Theorem 3 and an outline of the proof.

\vspace*{3ex}
\small

\begin{minipage}[t]{7cm}
Ryan McCulloch\\
Assistant Professor of Mathematics\\
University of Bridgeport\\
Bridgeport, CT 06604\\
e-mail: {\tt rmccullo@bridgeport.edu}
\end{minipage}
\hfill
\begin{minipage}[t]{7cm}
Marius T\u arn\u auceanu \\
Faculty of  Mathematics \\
``Al.I. Cuza'' University \\
Ia\c si, Romania \\
e-mail: {\tt tarnauc@uaic.ro}
\end{minipage}


\begin{thebibliography}{10}
\bibitem{1} L. An, J.P. Brennan, H. Qu and E. Wilcox, {\it Chermak-Delgado lattice extension theorems}, Comm. Algebra {\bf 43} (2015), 2201-2213.
\bibitem{2} B. Brewster, P. Hauck and E. Wilcox, {\it Groups whose Chermak-Delgado lattice is a chain}, J. Group Theory {\bf 17} (2014), 253-279.
\bibitem{3} B. Brewster and E. Wilcox, {\it Some groups with computable Chermak-Delgado lattices}, Bull. Aus. Math. Soc. {\bf 86} (2012), 29-40.
\bibitem{4} J.S. Brodkey, {\it A note on finite groups with an abelian Sylow group}, Proc. AMS {\bf 14} (1963), 132-133.
\bibitem{5} A. Chermak and A. Delgado, {\it A measuring argument for finite groups}, Proc. AMS {\bf 107} (1989), 907-914.
\bibitem{6} G. Glauberman, {\it Centrally large subgroups of finite $p$-groups}, J. Algebra {\bf 300} (2006), 480-508.
\bibitem{7} B. Huppert, {\it Endliche Gruppen}, I, Springer Verlag, Berlin, 1967.
\bibitem{8} I.M. Isaacs, {\it Large orbits in actions of nilpotent groups}, Proc. AMS {\bf 127} (1999), 45-50.
\bibitem{9} I.M. Isaacs, {\it Finite group theory}, Amer. Math. Soc., Providence, R.I., 2008.
\bibitem{10} R. McCulloch, {\it Chermak-Delgado simple groups}, Comm. Algebra {\bf 45} (2017), 983-991.
\bibitem{11} R. Schmidt, {\it Subgroup lattices of groups}, de Gruyter Expositions in Mathe\-matics 14, de Gruyter, Berlin, 1994.
\bibitem{12} M. T\u arn\u auceanu, {\it The Chermak-Delgado lattice of ZM-groups}, to appear in Results Math., DOI: 10.1007/s00025-017-0735-z.
\bibitem{13} J.G. Thompson, {\it Finite groups with fixed point free automorphisms of prime order}, Proc. Nat. Acad. Sci. US {\bf 45} (1959), 578-581.
\bibitem{14} E. Wilcox, {\it Exploring the Chermak-Delgado lattice}, Math. Magazine {\bf 89} (2016), 38-44.
\bibitem{15} H.J. Zassenhaus, {\it The theory of groups}, Second edition, Chelsea, New York, 1956.
\end{thebibliography}
\end{document}